\newtheorem{theorem}{Theorem}[section]
\newtheorem{lemma}[theorem]{Lemma}
\newtheorem{corollary}[theorem]{Corollary}
\newtheorem{observation}[theorem]{Observation}
\newtheorem{conjecture}[theorem]{Conjecture}
\newtheorem{problem}[theorem]{Problem}
\begin{document}

	\title{Note on Disjoint Cycles in Multipartite Tournaments}
	\author{ Gregory Gutin\thanks{Department of Computer Science. Royal Holloway University of London. {\tt g.gutin@rhul.ac.uk}.}, \hspace{2mm}  Wei Li\thanks{School of Mathematics and Statistics, Northwestern Polytechnical University. {\tt liw@nwpu.edu.cn}.}, \hspace{2mm}  Shujing Wang\thanks {Corresponding author.  School of Mathematics and Statistics, Central China Normal University. {\tt wang06021@126.com}.}, \hspace{2mm}  Anders Yeo\thanks {Department of Mathematics and Computer Science, University of Southern Denmark and Department of Mathematics and Applied Mathematics, University of Johannesburg. {\tt yeo@imada.sdu.dk}.}, \hspace{2mm}  Yacong Zhou\thanks{Department of Computer Science. Royal Holloway University of London. {\tt Yacong.Zhou.2021@live.rhul.ac.uk}.} }
	
	\date{}
	
	\maketitle
		\begin{abstract}
In 1981, Bermond and Thomassen conjectured that for any positive integer $k$, every digraph with minimum out-degree at least $2k-1$ admits $k$ vertex-disjoint directed cycles. In this short paper, we verify the Bermond-Thomassen conjecture for triangle-free multipartite tournaments and 3-partite tournaments. Furthermore, we characterize 3-partite tournaments with minimum out-degree at least $2k-1$ ($k\geq 2$) such that in each set of $k$ vertex-disjoint directed cycles, every cycle has the same length.
	\end{abstract}
\vspace{2mm}

%\noindent{\it Keywords:}
%Bermond-Thomassen conjecture, minimum out-degree, disjoint cycles, multipartite tournaments
%\vspace{2mm}

%\noindent{AMS subject classifications} 05C20,\  05C75

	\section{Introduction}
In this paper, digraphs do not have loops or parallel arcs.\footnote{Undefined terminology can be found later in this section or in the monograph \cite{Bang-JensenG18}.} For a digraph $D$, $V(D)$ ($A(D)$, resp.) are the vertex set (arc set, resp.) of $D$. 	 A digraph $D$ is {\em complete} if $xy$ and $yx$ are arcs for every pair $x,y$ of distinct vertices, i.e., $|A(D)|=|V(D)|(|V(D)|-1).$
The {\em minimum out-degree} of a vertex in $D$ is denoted by $\delta^+(D).$
An {\em orientation} of an undirected graph $G$ is a digraph obtained by replacing each edge $\{x,y\}$ of $G$ with arc $xy$ or $yx$. A {\em multipartite tournament} is an orientation of a complete multipartite graph. A multipartite tournament with $t\ge 2$ partite sets is a {\em $t$-partite tournament}.
Note that a tournament is a multipartite tournament in which every partite set is a singleton.

Bermond and Thomassen \cite{BermondT81} observed that a complete digraph $K^*_n$ on $2k-1$ vertices has $k-1$ vertex-disjoint cycles, but no $k$ such cycles. Note that $\delta^+(K^*_n)=2k-2$. Let $f(k)$ be the minimum integer such that if $\delta^+(D)\ge f(k)$ for a digraph $D$ then $D$  has $k$ vertex-disjoint cycles (if such an $f(k)$ does not exist then we set $f(k)=\infty$).
Bermond and Thomassen \cite{BermondT81} posed the  following:
\begin{conjecture}\label{conj:BT}
		%Let $D$ be a digraph and $k\in \mathbb{Z}_{>0}$. If $\delta^+(D)\ge 2k-1$ then $D$ contains $k$ vertex-disjoint cycles.
	For every $k\in \mathbb{Z}_{>0}$, $f(k)=2k-1.$	
	\end{conjecture}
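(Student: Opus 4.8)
The plan is to attack the conjecture by induction on $k$. The base case $k=1$ is immediate: any digraph with $\delta^+\ge 1$ contains a directed cycle, since following out-arcs from any vertex must eventually revisit a vertex. For the inductive step I would assume that every digraph with minimum out-degree at least $2(k-1)-1$ has $k-1$ vertex-disjoint cycles, and then, given a digraph $D$ with $\delta^+(D)\ge 2k-1$, try to extract a single directed cycle $C$ whose deletion leaves the minimum out-degree at least $2k-3$. Applying the inductive hypothesis to $D-V(C)$ would then yield $k-1$ vertex-disjoint cycles avoiding $C$, and together with $C$ this gives the desired $k$ cycles.

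The heart of the matter is thus to locate a cycle $C$ such that every vertex of $D-V(C)$ sends at most two arcs into $V(C)$, since only then is each out-degree reduced by at most two when $V(C)$ is removed. The first candidate I would try is a shortest directed cycle $C$: a shortest cycle has no forward chords, which constrains the arcs internal to $C$, but on its own this does not limit how many out-arcs an external vertex may fire into $C$. To gain that control I would set up an extremal/rerouting argument: among all shortest cycles, choose one minimizing the number of external vertices having three or more out-neighbours on it, and then show that if some external vertex $v$ had three out-neighbours on $C$, those arcs could be used to build a strictly shorter or otherwise ``cheaper'' cycle through $v$, contradicting the choice of $C$.

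The step I expect to be the main obstacle — and indeed the reason the conjecture remains open in full generality — is precisely this control of in-arcs to the chosen cycle from outside. In a digraph of high out-degree there may be vertices that fire many arcs into \emph{every} short cycle simultaneously, so no single short cycle can be removed cheaply, and the rerouting argument collapses because the shorter cycle it produces may reuse vertices that are needed elsewhere. Two natural ways to try to repair this are to weaken the inductive target, deleting a small family of cycles at once so that the out-degree loss is spread out, or to track a refined invariant in place of raw minimum out-degree — for instance a weighted out-degree that discounts arcs already pointing into selected cycles — so that the cost per inductive step is amortized. Even with such refinements, however, pushing the bound down to the exact value $2k-1$, rather than merely to some larger linear function $ck$, appears to demand an essentially new idea; I would therefore expect a complete proof to be considerably harder than the restricted cases of triangle-free multipartite tournaments and $3$-partite tournaments established in the remainder of this paper.
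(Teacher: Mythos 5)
The statement you were asked to prove is Conjecture \ref{conj:BT} --- the Bermond--Thomassen conjecture itself, which is open. The paper contains no proof of it; it only verifies the conjecture for triangle-free multipartite tournaments and $3$-partite tournaments, and its Discussion section explicitly poses even the $4$-partite case as an open problem. So there is no paper proof to compare against, and your proposal, as you concede in your final paragraph, is not a proof. The concrete gap is the inductive extraction step: you need a cycle $C$ in a digraph $D$ with $\delta^+(D)\geq 2k-1$ such that every vertex of $D-V(C)$ has at most two out-neighbours on $C$, and no such cycle need exist. Nothing about a shortest cycle, nor your extremal choice among shortest cycles, controls how many arcs an external vertex fires into $C$: a vertex $v$ with three out-neighbours on a shortest cycle does not in general yield a shorter or ``cheaper'' cycle, since the required return paths from those out-neighbours to $v$ may be long or absent, so the rerouting contradiction never materializes. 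It is instructive that the places where this naive induction \emph{does} work in the paper are exactly the places where adjacency structure supplies the degree bound for free: in a multipartite tournament, a vertex outside a triangle is non-adjacent to at least one of its three vertices (they lie in three distinct partite sets), so deleting a triangle costs each remaining vertex at most two out-neighbours, giving $\delta^+(D-V(C))\geq 2k-3$ in the proof of Theorem \ref{thm:3-partite}; for general multipartite tournaments the per-triangle cost is three, which is precisely why the corollary there requires $\delta^+\geq 3k-2$ rather than $2k-1$.

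Your fallback suggestions --- deleting several cycles at once to amortize the degree loss, or tracking a weighted out-degree --- are in essence the known routes: Thomassen's bound $f(k)\leq (k+1)!$, Alon's probabilistic bound $f(k)\leq 64k$, and Buci{\'c}'s refinement $f(k)\leq 18k$ all spread the cost in some such way, and all of them stop far short of the exact value $2k-1$, which is known only for $k\leq 3$ and for the special classes cited in this paper. The honest verdict is that your approach is the natural first attack, it fails at precisely the step you identified, and no repair within this framework is currently known.
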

	This conjecture is trivially true for $k=1$. Thomassen \cite{Thomassen83} showed it for $k=2.$
	The conjecture was proved for $k=3$ by Lichiardopol et al. \cite{LichiardopolPS09}; a shorter proof of this case is given by Bai et al. \cite{BaiLL15}.
	For every $k\in \mathbb{Z}_{>0}$, Thomassen \cite{Thomassen83} proved that $f(k)\le (k+1)!$. The upper bound on $f(k)$ was drastically improved to $64k$ by Alon \cite{Alon96} and improved further to $18k$ by Buci{\'c} \cite{Bucic18} who combined the probabilistic approach of Alon \cite{Alon96} and the result of Lichiardopol et al. \cite{LichiardopolPS09}.
	
In this paper, all cycles are directed. A cycle $C$ in a digraph $D$ is a {\em $p$-cycle} if $C$ has $p$ arcs. A 3-cycle is called a {\em triangle}. A digraph is {\em triangle-free} if it has no triangles.
	Conjecture \ref{conj:BT} was proved
	for tournaments by Bang-Jensen et al. \cite{Bang-JensenBT14} as  follows.
	\begin{theorem}\label{thm:jbj}
		Every tournament with $\delta^+(T)\ge 2k-1$ has $k$ vertex-disjoint triangles.
	\end{theorem}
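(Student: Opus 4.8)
The plan is to prove the statement by induction on $k$, at each step peeling off a single triangle and applying the inductive hypothesis to what remains. For the base case $k=1$, note that $\delta^+(T)\ge 1$ forces a directed cycle (follow out-arcs until a vertex repeats), and in a tournament any shortest cycle must be a triangle: a shortest cycle of length at least $4$ always has a chord, which splits it into two shorter cycles, a contradiction. Hence $T$ contains a triangle.

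For the inductive step, suppose the claim holds for $k-1$ and let $\delta^+(T)\ge 2k-1$. It suffices to find a single triangle $C$ with $\delta^+(T-V(C))\ge 2k-3=2(k-1)-1$; then the inductive hypothesis yields $k-1$ vertex-disjoint triangles in $T-V(C)$, and together with $C$ these give the desired $k$ triangles. For any vertex $u\notin V(C)$ we have $d^+_{T-V(C)}(u)=d^+_T(u)-|N^+(u)\cap V(C)|$, and since $d^+_T(u)\ge 2k-1$ this is at least $2k-3$ unless $|N^+(u)\cap V(C)|=3$ and $d^+_T(u)=2k-1$. Thus the only way deleting $C$ can hurt is if some vertex of out-degree exactly $2k-1$ dominates all three vertices of $C$. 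Writing $M$ for the set of vertices of out-degree exactly $2k-1$, the entire task reduces to finding a triangle $C$ such that no vertex of $M\setminus V(C)$ dominates all of $V(C)$. (If $M=\varnothing$, i.e.\ $\delta^+(T)>2k-1$, then every triangle works and we are done, since a triangle exists by the base-case argument.)

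To build a candidate triangle I would start from a vertex $v\in M$ and exploit its out-neighbourhood. Put $A=N^+(v)$ and $B=N^-(v)$, so $|A|=2k-1$. There must be an arc $w\to b$ with $w\in A$ and $b\in B$: otherwise every $w\in A$ sends all its out-arcs into $A\setminus\{w\}$, forcing $d^+(w)\le 2k-2$ and contradicting $d^+(w)\ge 2k-1$. Then $v\to w\to b\to v$ is a triangle $C$, and crucially any vertex $u$ dominating all of $C$ must satisfy $u\to v$; if moreover $u\in M$, then $u\in M\cap N^-(v)$. Consequently, if we can choose $v\in M$ with $N^-(v)\cap M=\varnothing$ — equivalently a vertex not dominated inside $T[M]$, for instance a vertex of in-degree $0$ in $T[M]$ — there is no admissible obstructing vertex at all, and $C=\{v,w,b\}$ settles the induction.

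The hard part is the remaining case, in which every vertex of $M$ has an in-neighbour in $M$ (so $T[M]$ has a source strongly connected component of size at least two, and no minimum-out-degree vertex dominates all the others). Here a single out-neighbourhood construction need not avoid every potential dominator, and the argument must use the extra freedom available: one can vary the choice of $v$ within a source component of $T[M]$ and, more importantly, the choice of the arc $w\to b$ realizing the triangle, so as to dodge the bounded set $M\cap N^-(v)$ of candidate dominators, or else re-route through one of those dominators when it is forced. Showing that among all triangles through minimum-out-degree vertices at least one escapes being completely dominated by another minimum-out-degree vertex is where the real content of the theorem lies, and I expect it to require a careful structural analysis of $T[M]$ (or, alternatively, an argument organized around a median order of $T$, whose local feedback property controls exactly which vertices can dominate a consecutive triple). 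The remainder of the proof is the routine bookkeeping of the deletion reduction described above.
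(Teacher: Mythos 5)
There is a genuine gap, and you name it yourself: the entire argument hinges on finding one triangle $C$ such that no vertex of out-degree exactly $2k-1$ outside $C$ dominates all of $V(C)$, and you only achieve this when the tournament $T[M]$ induced on the minimum-out-degree vertices has a vertex of in-degree zero. The complementary case --- every vertex of $M$ has an in-neighbour in $M$ --- is dismissed with a sketch (``vary $v$ within a source component, vary the arc $w\to b$, or use a median order''), which is not an argument. Worse, this is not a fringe case: it is exactly where the tight examples live. In a regular tournament on $4k-1$ vertices every vertex has out-degree $2k-1$, so $M=V(T)$ and $T[M]=T$ is strongly connected; your easy case never applies, and the ``hard part'' is the whole theorem. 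Your closing remark that ``the remainder of the proof is the routine bookkeeping'' inverts the situation: the bookkeeping is the easy part you did, and the unproved claim --- that some triangle always escapes domination by minimum-out-degree vertices, or that the induction can be repaired when it does not --- carries all the content.

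For context, this statement is not proved in the paper at all: it is quoted as Theorem \ref{thm:jbj} from Bang-Jensen, Bessy and Thomass\'e \cite{Bang-JensenBT14}, where its proof (of the Bermond--Thomassen conjecture for tournaments) occupies a full research article and does not proceed by the naive ``peel off one triangle'' induction, precisely because deleting a triangle can drop out-degrees by $3$ while the hypothesis only relaxes by $2$. The conjecture for general digraphs had previously been settled only for $k\le 3$, each case requiring its own paper, which is strong evidence that the case you left open cannot be closed by a short structural remark about $T[M]$. Your reduction and the construction of a triangle through a prescribed vertex (the arc from $N^+(v)$ to $N^-(v)$) are correct as far as they go, but as written the proposal proves the theorem only for tournaments in which some minimum-out-degree vertex has no minimum-out-degree in-neighbour.
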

	A natural question one would ask is whether we can prove this conjecture for some generalization of tournaments. The first generalization one may try is {\em extended tournaments}, where we blow up every vertex of a tournament to an arbitrary independent set. However, it is easy to see that the assertion that the conjecture holds for extended tournaments is  a simple corollary of Theorem \ref{thm:jbj}. In fact, we can replace every partite set of an extended tournament with an arbitrary transitive tournament on the same vertices and apply  Theorem \ref{thm:jbj} to the obtained tournament. Then, we are done by the fact that every triangle in the tournament is also a triangle in the extended tournament.
	
Extended tournaments are a special class of multipartite tournaments. Thus, let us consider multipartite tournaments. Bai et al. \cite{BaiLL15} proved Conjecture \ref{conj:BT} for bipartite tournaments.
	
\begin{theorem}\label{thm:bipartite tournament}
	Let $D$ be a bipartite tournament. If $\delta^+(D)\geq 2k-1$ then $D$ has at least $k$ vertex-disjoint $4$-cycles.
\end{theorem}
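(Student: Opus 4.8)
The plan is to prove the statement by induction on $k$, deleting one $4$-cycle at each step. Two elementary facts drive the argument. First, since $D$ is an orientation it has no digons, and since $D$ is bipartite every directed cycle has even length; I claim moreover that \emph{any} directed cycle forces a directed $4$-cycle. Indeed, take a shortest cycle $C=v_1v_2\cdots v_{2m}v_1$ and suppose $2m\ge 6$. The vertices $v_1$ and $v_4$ lie in opposite partite sets, so exactly one of the arcs $v_1v_4$, $v_4v_1$ is present; in the first case $v_1v_4v_5\cdots v_{2m}v_1$ is a shorter cycle (of length $2m-2\ge 4$), and in the second case $v_1v_2v_3v_4v_1$ is a $4$-cycle, either way contradicting minimality. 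Hence the shortest cycle has length $4$. Second, $\delta^+(D)\ge 1$ guarantees a directed cycle (follow out-arcs from any vertex until a vertex repeats), so any bipartite tournament with $\delta^+(D)\ge 1$ contains a $4$-cycle. This already settles the base case $k=1$.

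For the inductive step, assume $\delta^+(D)\ge 2k-1$ with $k\ge 2$. Since $2k-1\ge 1$, the facts above produce a directed $4$-cycle $C$. A $4$-cycle in a bipartite tournament alternates between the two partite sets, so it uses exactly two vertices from each set $X$ and $Y$. Now consider $D'=D-V(C)$ and any vertex $v\in V(D')$, say $v\in X$. All out-neighbours of $v$ lie in $Y$, and $V(C)$ contains only two vertices of $Y$; therefore deleting $V(C)$ destroys at most two out-arcs of $v$, so $d^+_{D'}(v)\ge (2k-1)-2=2(k-1)-1$. The same bound holds for $v\in Y$ by symmetry, giving $\delta^+(D')\ge 2(k-1)-1$.

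By the induction hypothesis, $D'$ contains $k-1$ pairwise vertex-disjoint $4$-cycles; together with $C$ these form $k$ pairwise vertex-disjoint $4$-cycles in $D$, completing the induction. The whole argument hinges on the exact numerical match between the deletion cost and the drop in the induction threshold: a $4$-cycle meets each partite set in precisely two vertices while every vertex sends all its out-arcs into a single partite set, so the out-degree can fall by at most $2$, exactly the amount by which $2k-1$ decreases to $2(k-1)-1$. The only step needing genuine care is the structural claim that a cycle always yields a $4$-cycle; everything else is bookkeeping, so I expect no serious obstacle.
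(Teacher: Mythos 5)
Your proof is correct. One point worth flagging before comparing: this paper never proves Theorem~\ref{thm:bipartite tournament} at all---it is quoted as a known result of Bai, Li and Li \cite{BaiLL15}---so there is no in-paper argument to measure yours against; what you have written is a self-contained proof of the cited result. Your induction is sound: the base case rests on the standard fact that a shortest cycle in a bipartite tournament has length exactly $4$ (your chord argument with $v_1,v_4$ is right, since a directed cycle in a bipartite digraph alternates between the partite sets), and the inductive step rests on the observation that a $4$-cycle meets each partite set in exactly two vertices while every vertex's out-neighbourhood lies entirely in the opposite partite set, so deleting $V(C)$ lowers any remaining out-degree by at most $2$---exactly the amount by which the threshold $2k-1$ drops. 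You correctly identify this numerical match as the crux; it is also precisely why the bipartite case is so much easier than the tournament case (Theorem~\ref{thm:jbj}), where deleting a triangle can cost $3$ in out-degree against a threshold drop of only $2$, defeating naive induction. The only degenerate point, not worth belabouring, is that the induction hypothesis must be applied to a non-empty bipartite tournament: for $k\ge 2$ the hypothesis $\delta^+(D)\ge 2k-1\ge 3$ forces each partite set to have at least $3$ vertices, so $D-V(C)$ still has both partite sets non-empty and the appeal to the hypothesis is legitimate.
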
	

In this paper, we prove that (i) a triangle-free multipartite tournament $D$ with $\delta^{+}(D)\geq 2k-1$ contains $k$ vertex-disjoint cycles, and
(ii) a $3$-partite tournament $D$ with $\delta^{+}(D)\geq 2k-1$ contains $k$ vertex-disjoint cycles.

Chen and Chang \cite{ChenChang} extended Theorems \ref{thm:jbj} and \ref{thm:bipartite tournament} where diversity of the lengths of the cycles is taken into consideration.
To state their results, we will use the following notation.
Let $\mathcal{C}$ be a set of $k$ vertex-disjoint cycles of $D$. Let $\kappa^k(\cal C)$ be the maximum number of cycles in $\cal C$ of distinct lengths. Let $$\kappa^k(D)=\max\{\kappa^k(\mathcal{C}): \mathcal{C}  \textit{~is a set of k vertex-disjoint directed cycles in}~D\}.$$

For tournaments, Chen and Chang \cite{ChenChang} proved the following:

\begin{theorem}
Let $D$ be a tournament with $\delta^+(D)\geq 2k-1$ and let $k\ge 3$.  Then $\kappa^k(D)\ge 2$.
\end{theorem}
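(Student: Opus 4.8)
The plan is to exhibit a single family of $k$ vertex-disjoint cycles in which not all lengths coincide; since the trivial bound $\kappa^k(D)\ge 1$ already holds, it suffices to find $k$ vertex-disjoint cycles exactly one of which has length at least $4$ while at least one other is a triangle. I would start from the family guaranteed by Theorem \ref{thm:jbj}: a set $\{T_1,\dots,T_k\}$ of $k$ vertex-disjoint triangles. Because a tournament on $n$ vertices has average out-degree $(n-1)/2$ and $\delta^+(D)\ge 2k-1$, we get $2k-1\le (n-1)/2$, i.e. $n\ge 4k-1$. Hence the set $R:=V(D)\setminus\bigcup_i V(T_i)$ of vertices lying off the triangles satisfies $|R|=n-3k\ge k-1\ge 2$, so there is always slack to work with. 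The whole argument then reduces to replacing one triangle by a longer cycle using vertices of $R$, keeping the remaining $k-1$ triangles untouched.

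The engine is an insertion lemma: for a triangle $x\to y\to z\to x$ and a vertex $u$ outside it that has both an in-neighbour and an out-neighbour among $\{x,y,z\}$, labelling each triangle vertex by whether $u$ dominates it shows that, going around the triangle cyclically, there is a consecutive pair $p\to q$ with $p\to u$ and $u\to q$; this yields a $4$-cycle obtained from $T$ by inserting $u$ into the arc $p\to q$. Consequently, if some $w\in R$ has mixed adjacency to some $T_i$ (it neither dominates nor is dominated by all of $T_i$), I extend $T_i$ to a $4$-cycle through $w$ and keep the other $k-1$ triangles; together these form a family of $k$ vertex-disjoint cycles of two distinct lengths, and we are done.

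It therefore remains to treat the homogeneous case, which I expect to be the main obstacle: for every $i$, each $w\in R$ is either all-out to $T_i$ (dominates all three vertices) or all-in to $T_i$ (is dominated by all three). I would dispose of the two degenerate sub-cases first. If every $w\in R$ were all-out to every triangle, then every triangle vertex would have all its out-neighbours among the other $3k-1$ triangle vertices, forcing the induced sub-tournament on these $3k$ vertices to have minimum out-degree at least $2k-1$; but such a tournament has average out-degree $(3k-1)/2<2k-1$ for $k\ge 2$, a contradiction. If every $w\in R$ were all-in to every triangle, then all out-arcs of each $w$ stay in $R$, so $D[R]$ is a tournament with $\delta^+\ge 2k-1$ on $|R|\ge 4k-1$ vertices; its terminal strong component has at least $2k$ vertices and, being a strong tournament, is pancyclic, hence contains a $4$-cycle, which combined with $k-1$ of the triangles finishes the proof.

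In the remaining genuinely mixed homogeneous situation, for each triangle $T_j$ write $O_j=\{w\in R: w\to T_j\}$ and $I_j=\{w\in R: T_j\to w\}$, so $R=O_j\sqcup I_j$; the averaging argument above guarantees a triangle with $I_j\neq\emptyset$, and a symmetric analysis produces one with both $O_j$ and $I_j$ nonempty. The target configuration is a triangle $T_j=b_1b_2b_3$ together with $w_0\in O_j$ and $w_1\in I_j$ joined by a back-arc $w_1\to w_0$ in $D[R]$: then $w_0\to b_1\to b_2\to w_1\to w_0$ is a $4$-cycle using only $b_1,b_2$ and $w_0,w_1$, vertex-disjoint from the other $k-1$ triangles, again producing two distinct lengths. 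The crux, and the step I expect to be hardest, is guaranteeing such a back-arc: if for every triangle all $R$-arcs between $O_j$ and $I_j$ ran from $O_j$ to $I_j$, one would have a rigid domination pattern inside $R$, and I would derive a contradiction by choosing the triangle family and the vertices $w_0,w_1$ extremally (for instance along a Hamiltonian path of $D[R]$) and counting arcs inside $R$ against the out-degree lower bound $2k-1$. Closing this last case cleanly is where the real work lies.
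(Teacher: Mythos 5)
There is a genuine gap, and it is worse than the one you flag yourself, because the \emph{setup} of your final case is already broken. Ruling out your two degenerate sub-cases only yields: some triangle $T_j$ has $I_j\neq\emptyset$, and some \emph{possibly different} triangle $T_{j'}$ has $O_{j'}\neq\emptyset$; it does not produce a single triangle with both $O_j$ and $I_j$ nonempty. In particular your argument says nothing about the configuration in which $[k]$ splits into two nonempty sets $A$ and $B$ such that every triangle indexed by $A$ is dominated by all of $R$ and every triangle indexed by $B$ dominates all of $R$. That configuration is untouched by both of your counting arguments: the vertices of the $A$-triangles may have many out-neighbours among the $B$-triangles (so the first averaging contradiction fails once $|A|<k$), and a vertex $w\in R$ only satisfies $d^+_R(w)\ge 2k-1-3|A|$, which can be zero, so $D[R]$ may be acyclic and the second sub-case argument fails too. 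Moreover, in this configuration all arcs between $R$ and the $A$-triangles leave $R$, and all arcs between the $B$-triangles and $R$ enter $R$; hence any cycle meeting both $R$ and the triangle vertices must use vertices of at least one $A$-triangle \emph{and} one $B$-triangle, i.e.\ it destroys at least two triangles and leaves you with only $k-1$ cycles. So the difficulty is structural, not just technical. Finally, even in the situation you do address (a triangle with $O_j,I_j\neq\emptyset$), the back-arc $w_1\to w_0$ from $I_j$ to $O_j$ is precisely what you never establish; ``choose extremally and count'' is a plan, not a proof, as you concede.

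For calibration: the paper does not prove this statement at all --- it quotes it from Chen and Chang \cite{ChenChang} --- but its proof of the 3-partite analogue, Theorem \ref{thm}, shows what closing such homogeneous cases takes. The decisive trick there is to first extract an extra, $(k+1)$-st disjoint cycle from the terminal strong component of $U=D-V(\mathcal{C})$ (or, when $U$ has sinks, to run a separate contradiction through the sink structure), so that one can afford to merge \emph{two} of the $k+1$ cycles into one longer cycle and still have $k$ cycles; an auxiliary tournament on the cycles, proved transitive via Lemmas \ref{lem:1} and \ref{lem:2}, then pins down where this merge can be carried out. Your correct ingredients --- the insertion lemma (essentially Lemma \ref{lem:1} in disguise) and the two averaging sub-cases --- are the easy portion of such an argument; the analysis of homogeneous adjacency, which is the actual content of the theorem, is missing.
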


For two subsets $X,Y$ of $V(D)$, let $A_D(X,Y)$ be the set of arcs from $X$ to $Y$ (we will omit the subscript if it is clear from the context).
For an integer $k\ge 2$ and positive integers $n_1,n_2,\dots, n_{2k}$, a digraph $D$ belongs to the digraph set $BT(n_1,n_2,\dots, n_{2k})$ if $D$ is a bipartite tournament with partite sets $X,Y$ such that (i) $X$ is the disjoint union of $X_1,X_2,\dots , X_{2k}$, where $|X_i|=n_i$ for every $i\in [2k]$, and  $Y$ is the disjoint union of $Y_1,Y_2,\dots , Y_{2k}$, where $|Y_i|=1$
for every $i\in [2k]$, and (ii) the arc set of $D$ is $$(\cup_{1\leq i \neq j\leq 2k} A(X_i,Y_j))\cup (\cup_{1\leq i \leq 2k} A(Y_i,X_i)).$$

Chen and Chang \cite{ChenChang} obtained the following characterization of strong bipartite tournaments $D$ with $\kappa^k(D)=1$.
	\begin{theorem}\label{A}
		Let $D$ be a strong bipartite tournament with $\delta^+(D)\geq 2k-1,$ where $k\geq 2$. If $\kappa^k(D)=1$, then $D$ is a member of $BT(n_1,n_2,\ldots, n_{2k})$, where $n_i\geq 2k-1$ for $i=1,\ldots, k$.
	\end{theorem}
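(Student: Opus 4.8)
The plan is to reduce the whole statement to a single size condition on one partite set and then read off the structure by an out-degree count. Write $P,Q$ for the two partite sets of $D$, and assume without loss of generality $|Q|\le |P|$. Since $D$ is strong, every vertex has in-degree at least $1$, so for each $p\in P$ we have $2k-1\le d^+(p)\le |Q|-1$, whence $|Q|\ge 2k$; symmetrically $|P|\ge 2k$. I claim the entire theorem follows once we show $|Q|=2k$. Indeed, if $|Q|=2k$ then for every $p\in P$ the identity $d^+(p)+d^-(p)=|Q|=2k$ together with $d^+(p)\ge 2k-1$ and $d^-(p)\ge 1$ forces $d^+(p)=2k-1$ and $d^-(p)=1$. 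Thus each $p\in P$ has a unique in-neighbour $y(p)\in Q$ and $N^+(p)=Q\setminus\{y(p)\}$. Writing $Q=\{y_1,\dots,y_{2k}\}$ and $X_i:=N^+(y_i)=\{p:y(p)=y_i\}$, the sets $X_i$ partition $P$, the arc set is exactly $A(Y_i,X_i)$ together with $A(X_i,Y_j)$ for all $i\neq j$ (with $Y_i=\{y_i\}$), and $n_i:=|X_i|=d^+(y_i)\ge 2k-1$. This is precisely membership in $BT(n_1,\dots,n_{2k})$ with all $n_i\ge 2k-1$, which is stronger than what is asked. Since $|Q|\ge 2k$ is already established, it remains only to rule out the possibility that both partite sets have at least $2k+1$ vertices.

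To rule this out I would argue by contradiction, producing a family of $k$ vertex-disjoint cycles of two different lengths, which would give $\kappa^k(D)\ge 2$. By Theorem \ref{thm:bipartite tournament}, $D$ contains $k$ vertex-disjoint $4$-cycles $C_1,\dots,C_k$; these cover exactly $2k$ vertices of each part, so under the assumption $|P|,|Q|\ge 2k+1$ there is a leftover vertex $w\in Q$ and a leftover vertex $z\in P$, both avoiding $\bigcup_i V(C_i)$. The target is to convert a single base cycle, say $C_1=a\to b\to a'\to b'\to a$ with $a,a'\in P$ and $b,b'\in Q$, into a $6$-cycle $C_1'$ on $V(C_1)\cup\{w,z\}$ while leaving $C_2,\dots,C_k$ untouched; then $\{C_1',C_2,\dots,C_k\}$ has lengths $6,4,\dots,4$ and witnesses $\kappa^k(D)\ge 2$, the desired contradiction. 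Because $w,z$ are leftover, any cycle on $V(C_1)\cup\{w,z\}$ is automatically disjoint from the other $C_j$, so the task is local: find a spanning $6$-cycle of the bipartite tournament induced on the three $P$-vertices $\{a,a',z\}$ and the three $Q$-vertices $\{b,b',w\}$. When no single choice of the pair $(w,z)$ yields such a $6$-cycle, I would fall back on strong connectivity of $D$, routing a shortest detour out of $\bigcup_i V(C_i)$ through $w$ and back, so as to insert $w$ and a partner $P$-vertex into one base cycle, again enlarging exactly one $4$-cycle to a $6$-cycle.

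This insertion step is the main obstacle, and its delicacy is an off-by-one deficit in the out-degree budget. The clean alternative would be to fix any $6$-cycle $C$ outright and then pack $k-1$ disjoint $4$-cycles in $D-V(C)$ via Theorem \ref{thm:bipartite tournament}; but deleting a $6$-cycle removes three vertices from each part, so it only guarantees $\delta^+(D-V(C))\ge 2k-4=2(k-1)-2$, which is one short of the $2(k-1)-1$ needed to invoke Theorem \ref{thm:bipartite tournament} for $k-1$ cycles. Overcoming exactly this one-unit gap is where strong connectivity and the two guaranteed leftover vertices must be used, i.e.\ one must \emph{grow} a base $4$-cycle rather than \emph{delete} a $6$-cycle. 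A possible alternative is induction on $k$: deleting a base $4$-cycle $C_1$ gives $\delta^+(D-V(C_1))\ge 2(k-1)-1$ and preserves the analogue $\kappa^{k-1}=1$ (a mixed family of $k-1$ cycles in $D-V(C_1)$ would combine with $C_1$ to contradict $\kappa^k(D)=1$); note that the reduction in the first paragraph used strong connectivity only through the bound $d^-\ge 1$, so the inductive hypothesis may be stated for digraphs with minimum in-degree at least $1$. The wrinkle is that $D-V(C_1)$ need not have this property, since vertices may become sources, and one must argue that discarding such sources (which lie on no cycle) leaves the cycle structure and the relevant part-size count intact; this bookkeeping is not obviously cleaner than the direct rerouting, so I would pursue the growth argument first.
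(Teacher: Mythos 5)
First, a point of comparison: this paper does not actually prove Theorem~\ref{A} --- it is quoted from Chen and Chang \cite{ChenChang} --- so your attempt has to stand on its own. Its first paragraph does stand: using strongness to get $d^-(p)\geq 1$, hence $|P|,|Q|\geq 2k$, and observing that once the smaller part $Q$ has exactly $2k$ vertices the identity $d^+(p)+d^-(p)=2k$ forces $d^+(p)=2k-1$, $d^-(p)=1$ for all $p\in P$, which reads off the $BT(n_1,\ldots,n_{2k})$ structure with $n_i=d^+(y_i)\geq 2k-1$, is correct and is a clean reduction of the theorem to the single claim $\min(|P|,|Q|)=2k$.

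Everything after that is a genuine gap, and it is the crux of the theorem. What remains is to show that $|P|,|Q|\geq 2k+1$ forces $\kappa^k(D)\geq 2$, and you never prove it: you propose to grow one of the $k$ disjoint $4$-cycles (from Theorem~\ref{thm:bipartite tournament}) into a $6$-cycle through the leftover vertices $w,z$, concede that the induced $3{+}3$ bipartite tournament may fail to have a spanning $6$-cycle for every choice of $(w,z)$, and then fall back on an unspecified ``shortest detour'' via strong connectivity. That fallback has a concrete failure mode you do not address: a path that leaves $\bigcup_i V(C_i)$, passes through $w$, and returns may re-enter at a \emph{different} cycle $C_j$ than the one it left, and splicing it in merges $C_i$ and $C_j$ into one long cycle, leaving only $k-1$ disjoint cycles --- which is no contradiction to $\kappa^k(D)=1$, since $\kappa^k$ quantifies over families of exactly $k$ cycles. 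Recovering the $k$-th cycle after such a merge is precisely the hard combinatorial core; compare the machinery this paper needs for the analogous Case~2 of Theorem~\ref{thm} in the $3$-partite setting (Claims 2.A and 2.B, Lemmas~\ref{lem:1} and~\ref{lem:2}, and the transitive-ordering argument on the auxiliary tournament of cycles). Your two stated alternatives are likewise abandoned with their obstacles unresolved: the ``delete a $6$-cycle'' route is off by one in the degree budget, as you note, and the induction on $k$ lacks both a treatment of vertices that become sources and a valid base case (the statement for $k=1$ is vacuous, and $k\geq 2$ is essential). As written, the proposal is a correct reduction plus a research plan, not a proof.
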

In this paper, we obtain a characterization of 3-partite tournaments $D$ with $\delta^+(D)\geq 2k-1$ and $\kappa^k(D)=1$.

\vspace{2mm}

\noindent {\bf Additional Terminology and Notation}
Sometimes, if $xy\in A(D)$, we will write $x\rightarrow y$ when $D$ is known from the context. Let $C$ be a cycle and let $x,y$ be two vertices in the cycle. We write $C[x,y]$ to denote the $(x,y)$-path on $C$. Let $a_D(X,Y)=|A_D(X,Y)|$.

For  $x\in V(D)$, $N^+(x)$ ($N^-(x)$, resp.) denote the out-neighbourhood (in-neighbourhood, resp.) of $x$. We write $d^+(x)$ ($d^-(x)$, resp.) to denote the out-degree (in-degree, resp.) of $x$, i.e., $d^+(x)=|N^+(x)|$ ($d^-(x)=|N^-(x)|$, resp.). A vertex $v$ is a {\em sink} if $d^+(x)=0$.
For a subset $V'\subseteq V(D)$, we use $d^+_{V'}(x)$ to denote the number of out-neighbours of $x$ in $V'$, i.e., $d^+_{V'}(x)=|N^+(x)\cap V'|$.

	\section{Proofs}

	We need to use the following vertex-pancyclic-like result for multipartite tournaments obtained by Guo and Volkmann \cite{GV}. Initially, we obtained a long technical proof of Lemma \ref{lem:triangle-free} (and therefore Theorem \ref{thm:3-partite}) without using this result. However, with the help of this result, we can now give a short proof for this theorem.
	\begin{theorem}\cite{GV}\label{thm:v-pancyclic}
		If $D$ is a strong $t$-partite $(t\geq 3)$ tournament, then every partite set of $D$ has at least one vertex, which lies on an $m$-cycle for all $m\in\{3,4,\dots,t\}$.
	\end{theorem}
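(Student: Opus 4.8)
The plan is to fix an arbitrary partite set, say $V_1$, and to exhibit a single vertex $v\in V_1$ lying on a cycle of every length in $\{3,4,\dots,t\}$. I would reduce the whole statement to two claims. \textbf{(Base.)} Some vertex of $V_1$ lies on a triangle. \textbf{(Extension.)} If a vertex $w$ lies on an $m$-cycle with $3\le m<t$, then $w$ lies on an $(m+1)$-cycle as well. Granting these, take the $V_1$-vertex $v$ produced by the Base claim and apply the Extension claim repeatedly to $v$; since each extension keeps the distinguished vertex fixed, the resulting cycles of lengths $3,4,\dots,t$ all pass through the same $v$, which is exactly the assertion.

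For the Base claim I would take a shortest cycle $C$ through a vertex of $V_1$; such a cycle exists because $D$ is strong. If $|C|=3$ we are done. If $|C|\ge 4$, write $C=u_0u_1\cdots u_{\ell-1}u_0$ with $u_0\in V_1$. For a consecutive triple $u_{i-1}\to u_i\to u_{i+1}$, the endpoints $u_{i-1},u_{i+1}$ cannot lie in distinct partite sets while keeping a $V_1$-vertex on a shorter cycle: an arc $u_{i-1}\to u_{i+1}$ gives a shorter cycle, and $u_{i+1}\to u_{i-1}$ gives a triangle through the middle vertex, so minimality forces $C$ to alternate between $V_1$ and one other partite set $V_j$. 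Since $t\ge 3$, pick a vertex $w$ in a third partite set $V_k$; then $w$ is adjacent to every vertex of $C$. If $w$ is neither dominated by all of $C$ nor dominates all of $C$, then going around $C$ there are consecutive vertices with $w\to c_i$ and $c_{i+1}\to w$, giving the triangle $w\to c_i\to c_{i+1}\to w$, one of whose vertices lies in $V_1$ by the alternation. The two degenerate cases (where $w$ dominates or is dominated by all of $C$) are ruled out, or relocated to another choice of $w$, using strong connectivity.

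For the Extension claim, since each partite set is nonempty we have $|V(D)|\ge t>m$, so there is a vertex $u\notin V(C)$. The favourable case is that $u$ has consecutive neighbours $c_i\to u\to c_{i+1}$ on $C$; inserting $u$ there yields an $(m+1)$-cycle still containing every vertex of $C$, in particular $w$. The difficulty is that in the multipartite setting $u$ is genuinely non-adjacent to the vertices of $C$ sharing its partite set, so this insertion pattern may be absent, and one may be forced instead to reroute: leave $C$ along an arc, pass through one or more off-cycle vertices (at least one lying in a partite set not exhausted by $C$, which exists because $m<t$), and re-enter $C$, possibly evicting a cycle vertex. I would control this by taking a \emph{shortest} such detour, so that the length increases by exactly one, and by arranging that the evicted vertex is never $w$, exploiting the $t-m>0$ unused partite sets to route around $w$.

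The main obstacle is exactly this Extension step. In a tournament, every vertex off a non-Hamiltonian cycle can be absorbed to lengthen the cycle by one while retaining all old vertices (the classical Moon argument), but the independence of the partite sets destroys this clean picture: an off-cycle vertex is truly non-adjacent to part of the cycle, insertion can fail, and rerouting may discard a cycle vertex. Proving that one can always raise the length by exactly one \emph{through the fixed vertex $w$}, using strong connectivity together with the surplus of partite sets not saturated by $C$, is the technical core, and is where I expect the bulk of the case analysis to concentrate.
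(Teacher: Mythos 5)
First, a remark on the comparison itself: the paper does not prove this statement at all --- it is quoted from Guo and Volkmann \cite{GV} as a black box --- so your proposal can only be assessed on its own merits, not against an internal proof.

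There is a fatal gap: your Extension claim is false as stated. It is \emph{not} true that every vertex lying on an $m$-cycle ($3\le m<t$) of a strong $t$-partite tournament also lies on an $(m+1)$-cycle. Concretely, take $V_1=\{w,w'\}$, $V_2=\{a\}$, $V_3=\{b\}$, $V_4=\{c\}$ with arcs $w\to a$, $w\to c$, $b\to w$, $w'\to a$, $b\to w'$, $c\to w'$, $a\to b$, $a\to c$, $b\to c$. This is a strong $4$-partite tournament, and $w$ lies on the triangle $w\to a\to b\to w$. However, the only arc entering $w$ is $b\to w$, so any $4$-cycle through $w$ would have the form $w\to x\to y\to b\to w$ with $x\in\{a,c\}$; checking the four candidates ($x=a$ with $y\in\{c,w'\}$, and $x=c$ with $y\in\{a,w'\}$) shows none is completable, so $w$ lies on no $4$-cycle. (The theorem is not violated: $w'$ lies on the $3$-cycle $w'\to a\to b\to w'$ and the $4$-cycle $w'\to a\to b\to c\to w'$.) This failure is structural rather than a repairable detail: it is precisely why the theorem asserts only that \emph{some} vertex of each partite set lies on cycles of all lengths. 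Your overall architecture --- fix the vertex produced by the Base step and lengthen cycles through that same vertex one step at a time --- therefore cannot succeed no matter how the case analysis in the Extension step is organized; the distinguished vertex must be chosen by a global argument, and the vertex witnessing length $m$ may genuinely have to change as $m$ grows, as long as all the final witnesses coincide for one well-chosen vertex.

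A secondary remark: your Base claim (each partite set contains a vertex on a triangle) is true, and your minimality/alternation argument for it is essentially sound --- in the bad case a shortest cycle through $V_1$ is indeed forced to alternate between $V_1$ and one other partite set, and a vertex of a third partite set with both an in-neighbour and an out-neighbour on that cycle yields a triangle through $V_1$. But the degenerate cases, where every vertex of every third partite set dominates or is dominated by the entire cycle, are dismissed with an appeal to strong connectivity; this needs a genuine argument, since the connecting paths may pass through off-cycle vertices of the two cycle partite sets, which are non-adjacent to part of the cycle. Even with that repaired, the proposal as a whole does not prove the theorem because of the false Extension claim.
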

	
	\begin{lemma}\label{lem:triangle-free}
		If $D$ is a triangle-free multipartite tournament with $\delta^+(D)\geq 2k-1$, then $D$ has $k$ vertex-disjoint cycles.
	\end{lemma}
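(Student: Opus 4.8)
The plan is to reduce the problem to the bipartite case, where Theorem \ref{thm:bipartite tournament} already delivers $k$ vertex-disjoint $4$-cycles. The guiding observation is that every cycle of $D$ lies inside a single strong component, so it suffices to locate one strong component that both inherits the out-degree bound and has very restricted structure. The natural candidate is a terminal strong component, and the whole argument is short precisely because Theorem \ref{thm:v-pancyclic} pins down what strong triangle-free multipartite tournaments look like.

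First I would pass to a terminal strong component. Since the condensation of $D$ is acyclic, it has a sink, i.e., a strong component $S$ with no arcs leaving $S$ in the condensation. Then for every $v\in S$ we have $N^+_D(v)\subseteq S$, and therefore $\delta^+(D[S])\geq \delta^+(D)\geq 2k-1\geq 1$. In particular $S$ is nontrivial: it has at least two vertices and contains a cycle.

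The crux is to show that $D[S]$ is in fact a bipartite tournament. As an induced subdigraph of $D$ it is a triangle-free multipartite tournament, and by construction it is strong. If $D[S]$ met at least three partite sets of $D$, it would be a strong $t$-partite tournament with $t\geq 3$, and Theorem \ref{thm:v-pancyclic} (Guo--Volkmann) would produce a $3$-cycle, contradicting triangle-freeness. Hence $D[S]$ meets at most two partite sets; since it is strong on at least two vertices it cannot sit inside a single (independent) partite set, so it meets exactly two partite sets and is a bipartite tournament.

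Finally, applying Theorem \ref{thm:bipartite tournament} to $D[S]$, which still satisfies $\delta^+\geq 2k-1$, yields $k$ vertex-disjoint $4$-cycles, and these are $k$ vertex-disjoint cycles of $D$. The only step needing genuine care is the structural claim in the third paragraph, and in particular the role of strongness: without it the transitively oriented $3$-partite tournament is triangle-free yet has three parts, so no such reduction is possible. Passing to the terminal strong component is exactly what restores both strongness and the preservation of out-degrees, and this is what I expect to be the main (though mild) obstacle to state cleanly.
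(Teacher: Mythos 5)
Your proof is correct and is essentially the paper's own argument: pass to a terminal strong component, note it inherits $\delta^+\geq 2k-1$, use Theorem \ref{thm:v-pancyclic} to conclude it must be a bipartite tournament (else it would contain a triangle), and finish with Theorem \ref{thm:bipartite tournament}. The extra details you supply (nontriviality of the component and that it cannot lie in a single partite set) are points the paper leaves implicit, but the route is the same.
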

	\begin{proof}
Let $H$ be the terminal strong component of $D$. Clearly, $\delta^+(H)\geq 2k-1$. We claim that $H$ is a bipartite tournament. Otherwise, $H$ is a strong $t$-partite tournament with $t\geq 3$, and therefore, by Theorem \ref{thm:v-pancyclic}, $H$ has a triangle, which contradicts the fact that $D$ is triangle-free. Thus, we are done by applying Theorem \ref{thm:bipartite tournament} to $H$.
	\end{proof}
	
	\begin{theorem}\label{thm:3-partite}
		Let $D$ be a $3$-partite tournament and $\delta^{+}(D)\geq 2k-1$. Then $D$  contains $k$ vertex-disjoint cycles.
	\end{theorem}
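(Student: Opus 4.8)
The plan is to induct on $k$, reducing the general case to the triangle-free case already settled in Lemma~\ref{lem:triangle-free}. The base case $k=1$ is immediate: since $\delta^+(D)\ge 1$, the digraph $D$ contains a directed cycle. For the inductive step with $k\ge 2$, I would split on whether $D$ contains a triangle. If $D$ is triangle-free, then Lemma~\ref{lem:triangle-free} applies verbatim and yields $k$ vertex-disjoint cycles, so the remaining work is to handle the case in which $D$ contains at least one triangle.

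The key structural observation is that, in a $3$-partite tournament with parts $V_1,V_2,V_3$, any triangle $C$ must use exactly one vertex from each part, since a directed triangle consists of three pairwise-adjacent vertices and there are no arcs inside a part. Fix such a triangle $C=abc$ and set $D'=D-V(C)$. I claim deleting $C$ costs each surviving vertex at most two out-neighbours: any $v\in V(D')$ lies in the same part as exactly one of $a,b,c$, so $v$ is non-adjacent to that vertex and hence $d^+_{V(C)}(v)\le 2$. Consequently $\delta^+(D')\ge (2k-1)-2=2(k-1)-1$. Since $D'$ is again a $3$-partite tournament, the induction hypothesis gives $k-1$ vertex-disjoint cycles in $D'$; adding $C$ produces $k$ vertex-disjoint cycles in $D$.

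The crux of the argument, and the only place where $3$-partiteness is used quantitatively, is the bound $d^+_{V(C)}(v)\le 2$: it is exactly this \emph{transversality} of triangles to the partite sets that makes the out-degree drop by $2$ rather than $3$, which is precisely what is needed to decrease the hypothesis from $2k-1$ to $2(k-1)-1$ and close the induction. One minor point I would address is that deleting $V(C)$ could empty a partite set, leaving a tournament with only two parts; this causes no difficulty, since such a $D'$ is bipartite and hence triangle-free, so it is covered either by Theorem~\ref{thm:bipartite tournament} or, equivalently, by the triangle-free branch of the induction. I do not anticipate a genuine obstacle here: once Lemma~\ref{lem:triangle-free} is available, the theorem follows from the degree accounting above, so the entire difficulty has effectively been pushed into the triangle-free case and the pancyclicity input of Theorem~\ref{thm:v-pancyclic}.
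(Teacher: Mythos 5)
Your proposal is correct and follows essentially the same route as the paper: induction on $k$, removing a triangle (which, being transversal to the three partite sets, costs each remaining vertex at most two out-neighbours, so $\delta^+(D-V(C))\geq 2(k-1)-1$), and falling back on Lemma~\ref{lem:triangle-free} in the triangle-free case. You merely spell out two details the paper leaves implicit --- the degree-accounting behind ``one can observe'' and the possibility that deleting the triangle empties a partite set --- both of which you resolve correctly.
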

	\begin{proof}
	We prove this by induction on $k$. When $k=1$, this result clearly holds. Assume that $k\geq 2$ and this theorem is true for $k-1$. If $D$ has a triangle $C$, then one can observe that $\delta^+(D-V(C))\geq 2k-1-2=2(k-1)-1$. By the induction hypothesis, $D-V(C)$ has $k-1$ vertex-disjoint cycles. Therefore, together with $C$, $D$ has at least $k$ vertex-disjoint cycles. If $D$ is triangle-free, then we are done by Lemma \ref{lem:triangle-free}.
	\end{proof}
	By a similar induction, we can prove the following result for all multipartite tournaments.
	\begin{corollary}
		Let $D$ be a multipartite tournament and $\delta^{+}(D)\geq 3k-2$. Then, $D$  contains $k$ vertex-disjoint cycles.
	\end{corollary}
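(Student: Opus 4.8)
The plan is to mimic the induction on $k$ used in the proof of Theorem~\ref{thm:3-partite}, adjusting only the arithmetic to account for the fact that $D$ may now have more than three partite sets. The base case $k=1$ is immediate, since any digraph with $\delta^+(D)\ge 1$ contains a cycle. So I would assume $k\ge 2$ and that the statement holds for $k-1$.

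For the inductive step I would split into the same two cases as before. If $D$ is triangle-free, then since $3k-2\ge 2k-1$ for every $k\ge 1$, Lemma~\ref{lem:triangle-free} applies directly and already yields $k$ vertex-disjoint cycles. If instead $D$ has a triangle $C$, I would delete $V(C)$ and invoke the induction hypothesis on $D-V(C)$. As $|V(C)|=3$, deleting these three vertices lowers the out-degree of each surviving vertex by at most $3$, so $\delta^+(D-V(C))\ge (3k-2)-3=3(k-1)-2$. The hypothesis then supplies $k-1$ vertex-disjoint cycles in $D-V(C)$, which together with $C$ give the required $k$ vertex-disjoint cycles.

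The only point that genuinely needs care — and the step that explains why the threshold weakens from $2k-1$ to $3k-2$ — is the out-degree drop in the triangle case. In the $3$-partite proof the drop was only $2$, because the three vertices of a triangle necessarily lie in three distinct partite sets (any two vertices of a triangle are joined by an arc and hence cannot share a partite set), and with exactly three partite sets every external vertex shares a partite set with precisely one triangle vertex, to which it is therefore non-adjacent. Once four or more partite sets are permitted, an external vertex can lie in a partite set disjoint from all three triangle vertices and thus be adjacent to all of them, so the drop can be the full $3$; this is exactly what forces the extra additive slack. I do not expect any real obstacle: the argument is a transcription of Theorem~\ref{thm:3-partite} with the constant $2$ replaced by the trivial bound $3=|V(C)|$, and with the triangle-free case handled verbatim by Lemma~\ref{lem:triangle-free}.
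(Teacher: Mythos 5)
Your proof is correct and follows essentially the same route as the paper's: induction on $k$, removing a triangle when one exists (with the out-degree dropping by at most $3$, giving $\delta^+(D-V(C))\geq 3(k-1)-2$), and invoking Lemma~\ref{lem:triangle-free} in the triangle-free case. Your added remarks—checking $3k-2\geq 2k-1$ explicitly and explaining why the drop of $2$ from the $3$-partite case becomes $3$ here—are accurate clarifications of points the paper leaves implicit.
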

	\begin{proof}
		We prove this by induction on $k$. When $k=1$, this result clearly holds. Assume that $k\geq 2$ and it is true for $k-1$. We may assume that $D$ has a triangle $C$ (for otherwise we are done by Lemma \ref{lem:triangle-free}). Then, $\delta^+(D-V(C))\geq 3k-2-3=3(k-1)-2$. And we are done by using the induction hypothesis.
	\end{proof}
	
	\begin{observation}\label{obs1}
		Let $D$ be a multipartite tournament and $S$ be its sink set, then all sinks lie in the same partite set and every vertex in $V(D)\setminus S$ can reach all vertices in $S$ by a path with length at most two.
	\end{observation}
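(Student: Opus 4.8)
The plan is to exploit the defining feature of a multipartite tournament: two vertices are joined by an arc precisely when they lie in distinct partite sets, while two vertices in the same partite set are non-adjacent. I would treat the two assertions separately. For the first, I would show that any two sinks must be non-adjacent. Indeed, if $u$ and $s$ were both sinks lying in different partite sets, they would be adjacent, so exactly one of the arcs $u\rightarrow s$, $s\rightarrow u$ would be present; either way one of $u,s$ would have a positive out-degree, contradicting that both are sinks. Hence no two sinks lie in different partite sets, which is exactly the statement that $S$ is contained in a single partite set.

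For the reachability claim, fix $v\in V(D)\setminus S$ and a sink $s\in S$; since $v$ is not a sink, $N^+(v)\neq\emptyset$. I would split into two cases according to whether $v$ and $s$ share a partite set. If they lie in different partite sets, they are adjacent, and as $s$ is a sink the only possible arc is $v\rightarrow s$, giving a path of length one. If instead $v$ and $s$ lie in the same partite set, I pick any out-neighbour $w\in N^+(v)$. Then $w$ lies in a different partite set from $v$, hence also from $s$, so $w$ and $s$ are adjacent; since $s$ is a sink the arc must be $w\rightarrow s$, and $v\rightarrow w\rightarrow s$ is the desired path of length two (here $w\neq s$ automatically, since $w$ and $s$ lie in different partite sets).

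There is no serious obstacle here; the only point requiring a little care is the same-partite-set case, where one must check that the chosen out-neighbour $w$ is genuinely adjacent to $s$ and that this adjacency is oriented towards $s$. Both facts follow at once from $s$ being a sink together with $w$ lying outside the partite set of $v$, and hence of $s$.
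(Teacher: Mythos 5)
Your proof is correct: the paper states this observation without proof, treating it as immediate, and your argument supplies exactly the intended elementary reasoning (two sinks in different partite sets would be adjacent, forcing one to have positive out-degree; and any non-sink reaches a sink either directly or via any out-neighbour, which necessarily dominates the sink). The one subtle point—that the intermediate vertex $w$ must send its arc \emph{towards} $s$—is handled cleanly by invoking that $s$ is a sink, so nothing is missing.
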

	
	\begin{lemma}\label{lem:1}
		Let $D$ be a multipartite tournament and let $C$, $C'$ be two vertex-disjoint $3$-cycles in $D$. If $a(V(C),V(C'))>0$ and $a(V(C'),V(C))>0$, then $D[V(C)\cup V(C')]$ contains a cycle with length at least $4$.
	\end{lemma}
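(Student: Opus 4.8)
The plan is to pass to the induced subdigraph $H:=D[V(C)\cup V(C')]$ on these six vertices and exploit its strong connectivity. First I would observe that $H$ is strongly connected: each of $C,C'$ is a strong subdigraph, and the hypotheses $a(V(C),V(C'))>0$ and $a(V(C'),V(C))>0$ supply arcs in both directions between the two triangles, so every vertex of $H$ reaches every other. Since a triangle occupies three distinct partite sets, $H$ is a strong $t$-partite tournament with $3\le t\le 6$.

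For $t\ge 4$ the conclusion is immediate from Theorem \ref{thm:v-pancyclic}: applied to the strong $t$-partite tournament $H$ it yields a cycle of every length in $\{3,\dots,t\}$, in particular a $4$-cycle, which lives in $H=D[V(C)\cup V(C')]$ as required. So the remaining, and main, case is $t=3$. Here the six vertices split evenly, each partite set $P_i$ containing exactly one vertex $a_i$ of $C$ and one vertex $c_i$ of $C'$; write $C=a_1\to a_2\to a_3\to a_1$ with $a_i\in P_i$ and let $c_i\in P_i$ be the vertex of $C'$ in $P_i$. The non-adjacent cross pairs are exactly $\{a_i,c_i\}$, so there are precisely six cross arcs, namely between $a_i$ and $c_j$ with $i\ne j$, and by hypothesis at least one is directed $C\to C'$ and at least one is directed $C'\to C$.

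The key splicing step is this: suppose I can find an arc $a_i\to c_j$ (from $C$ to $C'$) and an arc $c_{j'}\to a_{i'}$ (from $C'$ to $C$) with $a_i\ne a_{i'}$ and $c_j\ne c_{j'}$. Then
$$a_i \to c_j \to_{C'} c_{j'} \to a_{i'} \to_C a_i,$$
where $\to_{C'}$ and $\to_C$ denote traversal along the respective triangles, is a genuine cycle, because its $C'$-segment and its $C$-segment lie in different triangles and are therefore vertex-disjoint. Each of these two segments has length at least one (as $c_j\ne c_{j'}$ and $a_i\ne a_{i'}$), so the cycle has length at least $2+1+1=4$, which is what we want.

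It remains to produce such an \emph{independent} pair of oppositely directed cross arcs, and this is where the real work sits. I would form the auxiliary graph on the six cross arcs in which two arcs are adjacent whenever they share their endpoint in $C$ or their endpoint in $C'$. Each cross arc between $a_i$ and $c_j$ shares its $C$-endpoint with exactly one other cross arc and its $C'$-endpoint with exactly one other, so this auxiliary graph has maximum degree $2$. If no independent pair existed, then every arc directed $C\to C'$ would be adjacent in this graph to every arc directed $C'\to C$; fixing one arc of each orientation (both classes are nonempty) and using the degree bound would force each orientation class to contain at most two arcs, contradicting that together they account for all six cross arcs. This contradiction yields the desired pair and, via the splice above, a cycle of length at least $4$. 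The only genuinely delicate point is this final counting step---ruling out that the two-way-arc hypothesis could be met with no independent pair---whereas the strong-connectivity reduction and the appeal to Theorem \ref{thm:v-pancyclic} dispose of every case with $t\ge 4$ cleanly.
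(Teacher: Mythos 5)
Your proof is correct, but it takes a genuinely different route from the paper's. The paper argues uniformly, with no case distinction on the number of partite sets: since every vertex of one triangle is non-adjacent to at most one vertex of the other, at least six of the nine cross pairs carry arcs, so w.l.o.g.\ $a(V(C),V(C'))\geq 3$; it then fixes an arc $y'\to y$ from $C'$ to $C$, asserts that some arc goes from $\{x,z\}$ to $\{x',z'\}$, and splices the two triangles exactly as in your key step. You instead split on the number $t$ of partite sets meeting $V(C)\cup V(C')$, dispatching $t\geq 4$ via strong connectivity and Theorem~\ref{thm:v-pancyclic}, and reserving the combinatorial work for $t=3$, where the cross pairs form exactly six arcs whose ``share an endpoint'' graph is $2$-regular, so two oppositely directed independent arcs must exist (otherwise fixing one arc of each orientation shows each orientation class has at most two members, against six arcs in total). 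The paper's version is shorter and self-contained: it covers all $t$ at once and never needs the pancyclicity theorem, which the paper invokes only later (in Claim~2.B and the final characterization). What your version buys is rigor at the one point where the paper is too quick: from $a(V(C),V(C'))\geq 3$ alone one cannot conclude that an arc runs from $\{x,z\}$ to $\{x',z'\}$, since as many as four arcs from $C$ to $C'$ can leave $y$ or enter $y'$ (for instance $y\to x'$, $y\to z'$, $x\to y'$ already give three); in such configurations the paper's step needs a small patch (one exhibits a $4$-cycle through $y'\to y$ directly, e.g.\ $x\to y'\to y\to z\to x$ when $C=x\to y\to z\to x$), whereas your degree-two counting closes the $t=3$ case without exceptions.
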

	\begin{proof}
		Let $V(C)=\{x,y,z\}$ and $V(C')=\{x',y',z'\}$. As $D$ is a multipartite tournament, $a(V(C),V(C'))+a(V(C'),V(C))\geq 6$. Thus, we may assume without loss of generality that $a(V(C),V(C'))\geq 3$ and $y'\to y$. Since $a(V(C),V(C'))\geq 3$, there is an arc from $\{x,z\}$ to $\{x',z'\}$. Assume w.l.o.g. that $x\to x'$. Then, $xx'C'[x',y']y'yC[y,x]x$ is a cycle with length at least 4.
	\end{proof}
	\begin{lemma}\label{lem:2}
		Let $D$ be a $3$-partite tournament and $C_1$, $C_2$ and $C_3$ be three vertex-disjoint $3$-cycles in $D$. If for all $i\in [3]$, $a(V(C_{i+1}),V(C_i))=0$ where $C_4=C_1$, then $D[V(C_1)\cup V(C_2)\cup V(C_3)]$ has two vertex-disjoint cycles with different lengths.
	\end{lemma}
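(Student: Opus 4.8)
The plan is to exploit the rigidity forced by the hypothesis and then finish with Lemma~\ref{lem:1}. First I would record the coarse structure. Since each $C_i$ is a directed triangle in a $3$-partite tournament, its three vertices are pairwise adjacent and hence lie in three distinct partite sets, so $C_i$ meets every partite set in exactly one vertex. Consequently there are exactly six arcs between any two of the triangles, i.e. $a(V(C_i),V(C_j))+a(V(C_j),V(C_i))=6$, and the hypothesis $a(V(C_{i+1}),V(C_i))=0$ forces all six to point forward, giving $C_1\Rightarrow C_2\Rightarrow C_3\Rightarrow C_1$. Writing the partite sets as $U_1,U_2,U_3$ and naming the vertices of $C_1,C_2,C_3$ as $a_p,b_p,c_p\in U_p$ for $p\in[3]$, this says precisely that $a_p\to b_q$, $b_p\to c_q$ and $c_p\to a_q$ whenever $p\neq q$.

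Next I would introduce the three \emph{transversal} triangles $T_1=a_1b_2c_3$, $T_2=a_2b_3c_1$ and $T_3=a_3b_1c_2$. Each is a genuine $3$-cycle, since it uses only forward arcs with distinct subscripts (e.g. $a_1\to b_2\to c_3\to a_1$), and the three of them partition $V(C_1)\cup V(C_2)\cup V(C_3)$ into vertex-disjoint triangles. The key feature of this decomposition is that the three forward arcs between $T_1$ and $T_2$ all point from $T_1$ to $T_2$, so $a(V(T_1),V(T_2))>0$ always; on the other hand an arc from $T_2$ to $T_1$ can only be an \emph{internal} arc of one of $C_1,C_2,C_3$, namely $a_2\to a_1$, $b_3\to b_2$ or $c_1\to c_3$. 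Hence $a(V(T_2),V(T_1))>0$ as soon as at least one of these three internal arcs is present. In that case Lemma~\ref{lem:1}, applied to the disjoint triangles $T_1$ and $T_2$, yields a cycle of length at least $4$ inside $V(T_1)\cup V(T_2)$, and together with the disjoint triangle $T_3$ this produces two vertex-disjoint cycles of different lengths, as required.

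The one remaining configuration is when all three internal arcs point the other way, i.e. when $a_1\to a_2\to a_3\to a_1$, $b_1\to b_2\to b_3\to b_1$ and $c_1\to c_2\to c_3\to c_1$; this is the main obstacle, because then $T_1,T_2,T_3$ inherit exactly the same forward cyclic pattern $T_1\Rightarrow T_2\Rightarrow T_3\Rightarrow T_1$ and Lemma~\ref{lem:1} no longer applies to any pair of them. Here I would simply exhibit the two cycles by hand, using a single internal arc to break the symmetry: the $4$-cycle $a_1\to b_2\to b_3\to c_2\to a_1$ and the triangle $a_2\to b_1\to c_3\to a_2$ are vertex-disjoint and have lengths $4$ and $3$. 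Verifying both are cycles is routine, since consecutive vertices lie in distinct partite sets and every arc used is either a forward arc or the internal arc $b_2\to b_3$. I expect this symmetric all-forward case to be the only real difficulty; every other case reduces cleanly to Lemma~\ref{lem:1} through the transversal-triangle decomposition.
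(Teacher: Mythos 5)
Your proof is correct --- the transversal triangles $T_1,T_2,T_3$ are indeed $3$-cycles, your inventory of the arcs between $T_1$ and $T_2$ (three forward arcs plus three internal arcs) is accurate, Lemma~\ref{lem:1} applies exactly when one of the three internal arcs points from $T_2$ to $T_1$, and the two cycles you exhibit in the remaining symmetric case are vertex-disjoint and valid --- but it is organized differently from the paper's proof, which needs no case distinction and no appeal to Lemma~\ref{lem:1}. The paper names $x_i,y_i,z_i$ for the vertices of $C_i$ in the first, second and third partite sets, assumes w.l.o.g.\ (by swapping the names of two partite sets) that $C_1=x_1y_1z_1x_1$, and then immediately writes down the $4$-cycle $x_1y_1z_2y_3x_1$ and the disjoint $3$-cycle $z_1y_2x_3z_1$: the $3$-cycle is exactly one of your transversal triangles, and the $4$-cycle uses the single internal arc $x_1\to y_1$ together with three forward arcs, so it exists no matter how $C_2$ and $C_3$ are oriented. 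In other words, the construction you reserve for your ``hard'' all-forward case (the $4$-cycle $a_1b_2b_3c_2a_1$ through the internal arc $b_2\to b_3$, plus the transversal triangle $a_2b_1c_3a_2$) in fact works in every case, once one normalizes the orientation of a single $C_i$ by relabelling partite sets; that w.l.o.g.\ is what lets the paper skip your dichotomy entirely. What your route buys instead is a clean structural picture: the nine vertices decompose into three transversal triangles, Lemma~\ref{lem:1} disposes of every configuration except one, and the unique obstruction is the fully symmetric pattern in which the transversal triangles reproduce the original forward structure --- slightly longer, but it reuses the paper's machinery and isolates exactly where a bare-hands construction is unavoidable.
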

	\begin{proof}
		Let $V(C_i)=\{x_i,y_i,z_i\}$. Assume w.l.o.g. that $x_i$'s ($y_i$'s, $z_i$'s, respectively) are in the same partite set and $x_1\to y_1\to z_1\to x_1$ (for otherwise we can exchange $y_i$'s and $z_i$'s). Thus, we have a 4-cycle $x_1y_1z_2y_3x_1$ and a 3-cycle $z_1y_2x_3z_1$.
	\end{proof}

	\begin{theorem}\label{thm}
		Every $3$-partite tournament $D$ with $\delta^+(D)\geq 2k-1$ satisfies that $\kappa^k(D)\geq 2$ unless $D$ is triangle-free.
	\end{theorem}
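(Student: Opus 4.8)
The statement is meaningful only for $k\ge 2$ (a single cycle always gives $\kappa^1=1$), so the plan is to induct on $k\ge 2$, peeling off one triangle at each step. Suppose $D$ has a triangle $T$ and put $D'=D-V(T)$. Since $T$ meets each partite set in exactly one vertex, every remaining vertex is adjacent to at most two vertices of $T$, whence $\delta^+(D')\ge (2k-1)-2=2(k-1)-1$. If $D'$ is triangle-free, then by Lemma \ref{lem:triangle-free} it has $k-1$ vertex-disjoint cycles, each of length at least $4$, and together with $T$ these are $k$ vertex-disjoint cycles of at least two distinct lengths, so $\kappa^k(D)\ge 2$. If instead $D'$ contains a triangle and $k\ge 3$, then $D'$ is a $3$-partite tournament with $\delta^+(D')\ge 2(k-1)-1$ that is not triangle-free, so the induction hypothesis gives $\kappa^{k-1}(D')\ge 2$; appending $T$ to a witnessing family of $k-1$ cycles of at least two distinct lengths preserves the diversity and yields $\kappa^k(D)\ge 2$. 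This reduces everything to the base case $k=2$ with $D'$ containing a triangle, that is, to producing two vertex-disjoint cycles of different lengths in a $3$-partite tournament with $\delta^+(D)\ge 3$ that already has two vertex-disjoint triangles.

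For the base case I would first clear the easy sub-cases through Lemmas \ref{lem:1} and \ref{lem:2}. Since every triangle of a $3$-partite tournament meets each partite set once, any two disjoint triangles $C,C'$ span exactly six adjacent pairs, so $a(V(C),V(C'))+a(V(C'),V(C))=6$. Suppose $D$ has three vertex-disjoint triangles $C_1,C_2,C_3$ and inspect the three pairwise relations. If some pair exchanges arcs in both directions, Lemma \ref{lem:1} yields a cycle of length at least $4$ on those six vertices, which paired with the third triangle already gives two disjoint cycles of different lengths. Otherwise every pair is one-directional and the relations form a tournament on $\{C_1,C_2,C_3\}$; if that tournament is a directed triangle, its defining condition is precisely the hypothesis $a(V(C_{i+1}),V(C_i))=0$ of Lemma \ref{lem:2}, which outputs two disjoint cycles of different lengths directly. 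Hence the only configurations that survive are the transitively ordered triple of disjoint triangles and the case in which $D$ has no three disjoint triangles at all.

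The heart of the argument, and the step I expect to be the main obstacle, is this surviving stubborn configuration: the six vertices of two disjoint triangles admit only the trivial $3+3$ repartition, so diversity can arise only by rerouting one triangle through a vertex lying \emph{outside} the triangle packing. Such a vertex is forced by a degree count: for the two disjoint triangles $C_1,C_2$ a vertex $v\in C_i$ has one out-neighbour inside $C_i$ and at most two in the other triangle, hence at most three out-neighbours inside $C_1\cup C_2$; as $\delta^+(D)\ge 3$, if equality held for every vertex then both ordered pairs $(C_1,C_2)$ and $(C_2,C_1)$ would carry all six crossing arcs, which is impossible, so some triangle-vertex $v$ beats a vertex $w\notin V(C_1)\cup V(C_2)$. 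It then remains to splice $w$ into the triangle of $v$: according to the partite class of $w$, either $w$ can be inserted on one of the three arcs of that triangle to build a $4$-cycle disjoint from the other triangle, giving $\kappa^2(D)\ge 2$, or the failure of every such splicing fixes enough orientations to contradict the standing assumption that every pair of disjoint cycles consists of two triangles. I expect the delicate part to be exactly this final analysis — controlling the arcs from $w$ to both triangles, and, in the transitive triple, exploiting that the sink triangle emits no arc to the other two and therefore has vertices with several out-neighbours outside the packing to reroute with — where the sink and reachability structure recorded in Observation \ref{obs1} should supply the tools to finish the residual sub-cases.
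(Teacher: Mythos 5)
Your reduction to the base case $k=2$ is sound, and in fact is a nice reorganization: peeling off a triangle $T$, using Lemma \ref{lem:triangle-free} when $D-V(T)$ is triangle-free (all its cycles then have length at least $4$, so adding $T$ gives diversity), and invoking the induction hypothesis otherwise, correctly reduces the theorem to showing $\kappa^2(D)\geq 2$ for a $3$-partite tournament with $\delta^+(D)\geq 3$ that contains two vertex-disjoint triangles. Your dispatch of three disjoint triangles via Lemmas \ref{lem:1} and \ref{lem:2} is also correct, and your degree count forcing some packing vertex $v$ to have an out-neighbour $w$ outside the packing is valid. But the proof stops exactly where the real work begins. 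The surviving configuration --- two disjoint triangles (or a transitive triple) plus the external vertex $w$ --- is not resolved: you only assert that ``either $w$ can be inserted \dots or the failure of every such splicing fixes enough orientations to contradict the standing assumption,'' and you explicitly flag this as the step you have not done. That step is the core of the theorem, and a single-vertex splicing analysis cannot finish it: if $C_1=xyzx$ and $x\to w$, the only $4$-cycle obtainable by inserting $w$ into $C_1$ is $xwyzx$, which requires $w\to y$; when $w$ lies in the partite set of $y$, or when $y\to w$, no local conclusion follows, and the cycle witnessing $\kappa^2(D)\geq 2$ must in general traverse a path of length two or more through $D-V(C_1)-V(C_2)$.

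This is precisely why the paper's argument is global rather than local. It splits on whether $U=D-V(\mathcal{C})$ has a sink. If it does, the contradiction is reached by routing cycles through paths of length at most two in $U$ into the sink set (Observation \ref{obs1}, Claims 1.A and 1.B, and the in-degree bound \eqref{eq1}). If it does not, the paper extracts an extra triangle $C_{k+1}$ from the terminal strong component of $U$, forms the auxiliary tournament on all $k+1$ triangles, proves it is transitive (Claims 2.A and 2.B, which is where Lemmas \ref{lem:1} and \ref{lem:2} are actually used), and then builds a long cycle through $U$ and two of the triangles to contradict $\kappa^k(D)=1$. None of this machinery --- the sink set, reachability within $U$, the terminal strong component, the transitive auxiliary tournament and its sink triangle --- appears, even in outline, in your final paragraph; your appeal to Observation \ref{obs1} as something that ``should supply the tools'' is not a substitute for the analysis. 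So the proposal has a genuine gap: the base case $k=2$, which carries essentially all the difficulty of the theorem, is left unproved.
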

	
	\begin{proof}
		Denote the three partite sets of $D$ by $V_1$, $V_2$, and $V_3$. Assume to the contrary that $D$ contains a triangle and  $\kappa^{k}(D)=1$. By the proof of Theorem \ref{thm:3-partite}, it suffice to consider the case when we have collection of $k$ vertex-disjoint 3-cycles $\mathcal{C}=\{C_1,\ldots, C_k\}$ where $C_i=x_i y_i z_i x_i$ for all $i\in [k]$. Let $V( \mathcal{C})=\cup_{i=1}^k V(C_i)$, $U=D-V(\mathcal{C})$ and $S$ the set of sinks in $U$ (i.e., $S=\{x\in V(U):d^+_{V(U)}(x)=0\}$). We will consider the following two cases.\\
		
		\textbf{Case 1:} $S\neq \emptyset$.
		
		By Observation \ref{obs1}, we may assume without loss of generality that $S\subseteq V_3$ and $z_i\in V_3$ for all $i\in [k]$. Note that for every $s\in S$, we have that
		$d^+_{V(\cal C)}(s)=d^+(s)\geq 2k-1$. Since $(N^+(s)\cup N^-(s))\cap V(\mathcal{C})\subseteq V(\mathcal{C})\cap (V_1\cup V_2)$, $d_{V(\mathcal{C})}^{+}(s)+d_{V(\mathcal{C})}^{-}(s)= 2k$. Hence, for any $s\in S$, we have that
		\begin{equation}\label{eq1}
			d^-_{V(\mathcal{C})} (s)\leq 1.
		\end{equation}
		{\bf Claim 1.A.} For any $i\in [k]$, $d^+_{V(U)}(z_i)=0$ and $N^+(z_i)=\{x_1,y_1,\ldots, x_k, y_k\}\setminus \{y_i\}$.
		\begin{proof}
Assume that there exists $u\in V(U)$ such that $z_iu\in A(D)$. As $z_i\in V_3$, we have that $u\notin V_3$ and therefore $us\in A(D)$ for all $s\in S$. Note that by \eqref{eq1}, we have that either $sx_i\in A(D)$ or $sy_i\in A(D)$. Thus, either $z_i u s x_i y_i z_i$ or $z_i us y_i z_i$ is a cycle of $D$. This cycle together with cycles in $\mathcal{C}\setminus \{C_i\}$ forms a collection of $k$ vertex-disjoint cycles that have $k-1$ $3$-cycles and one cycle of length at least 4, a contradiction. Thus, $N^+(z_i)\subseteq V({\mathcal{C}})$. Since $d^+(z_i)\geq 2k-1$, we have $N^+(z_i)=\{x_1,y_1,\ldots, x_k, y_k\}\setminus \{y_i\}$.
		\end{proof}
		\noindent{\bf Claim 1.B.} For  $i\in [k], S\subseteq N^+(y_i)$.
		
		\begin{proof}
		
			For any $i\in [k]$, by Claim 1.A, we have that $d^+_{V(\mathcal{C})}(x_i)\leq k$ and therefore $d^+_{V(U)}(x_i)\geq k-1>0$, which implies that $x_i$ has an out-neighbour in $V(U)$. Let $u\in V(U)$ be the out-neighbour of $x_i$. If $u\in S$, then, by \eqref{eq1}, we have that $uy_i\in A(D)$. Thus we have that $C_i':=x_iuy_iz_ix_i$ is a cycle of length 4 and $\mathcal{C}':=(\mathcal{C}\setminus \{C_i\})\cup \{C_i'\}$ is a collection of $k$ vertex-disjoint cycles with $\kappa^k(\mathcal{C}')=2$, a contradiction. Thus, $u\in V(U)\setminus S$.

			Now, we assume to the contrary that there is a vertex $s\in S$ such that $sy_i\in A(D)$. By observation \ref{obs1}, there is an $(u,s)$-path with length at most 2 in $U$. The existence of this path together with $s\to y_i\to z_i\to x_i\to u$ implies a cycle with length at least 4 in $D[V(U)\cup V(C_{i})]$ which together with cycles in $\mathcal{C}\setminus \{C_i\}$ implies that $\kappa^k(D)\geq 2$, a contradiction.
		\end{proof}
		
		By Claim 1.B, we have that $d^-_{V(\mathcal{C})} (s)\geq k\geq2$ for all $s\in S$, a contradiction to \eqref{eq1}. This completes the proof of Case 1.\\

		{\bf Case 2.} $S=\emptyset$.
		
	 Let $D'$ be the terminal strong component of $U$. Note that $\delta^+(D')=1$ for otherwise $\delta^+(D')\geq 2$ and therefore $D'$ contains a cycle of length at least 4.  Let $C_{k+1}=x_{k+1}y_{k+1}z_{k+1}x_{k+1}$ be a cycle in $D'$ that includes a vertex of the minimum out-degree. Assume without loss of generality that $d^+_{V(U)}(x_{k+1})=1$.\\
	 {\bf Claim 2.A.} For  $i,j \in [k+1]$, all arcs between $V(C_i)$ and $V(C_j)$ go in the same direction.
		\begin{proof}
			Assume to the contrary that $a(V(C_i), V(C_j))>0$ and $a(V(C_j), V(C_i))>0$. By Lemma \ref{lem:1}, there is a cycle of  length at least 4 in $D[V(C_i)\cup V(C_j)]$ and therefore we are done by the fact that this new cycle together with cycles $(\mathcal{C}\cup \{C_{k+1}\})\setminus \{C_i,C_j\}$ forms a set of $k$ cycles that have $k-1$ 3-cycles and one cycle with length at least 4, a contradiction.
		\end{proof}

		Construct a digraph $T$ with $k+1$ vertices $c_1$, $\dots$, $c_{k+1}$ where $c_i\to c_j$ if and only if $a(V(C_i),V(C_j))>0$. By Claim 2.A, $T$ is a tournament.\\
		{\bf Claim 2.B.} $T$ is a transitive tournament.
		\begin{proof}
			Suppose it is not. Then by Theorem \ref{thm:v-pancyclic}, $T$ has a 3-cycle $c_ic_jc_hc_i$, which, by Claim 2.A, implies that $a(V(C_j), V(C_i))=a(V(C_h), V(C_j))=a(V(C_i), V(C_h))=0$. By Lemma \ref{lem:2}, there are two vertex-disjoint cycles with different lengths in $D[V(C_i)\cup V(C_j) \cup V(C_h)]$. These two cycles, together with cycles in $(\mathcal{C}\cup \{C_{k+1}\})\setminus \{C_i,C_j,C_h\}$, forms $k$ cycles with at least two of them having different lengths, a contradiction.
		\end{proof}
		
		By Claim 2.B, we assume w.l.o.g. that $c_1$, $c_2$, $\dots$, $c_k$ is an acyclic ordering in $T-c_{k+1}$. Since $d^+_{V(U)}(x_{k+1})=1$, $d^+_{V(\mathcal{C})}(x_{k+1})\geq 2(k-1)$. Thus, $d^+_T(c_{k+1})\geq k-1$ and therefore either $c_{k+1}$, $c_1$, $c_2$, $\dots$, $c_k$ or $c_1$, $c_{k+1}$, $c_2$, $\dots$, $c_k$ is an acyclic ordering of $T$. In both case we have $c_{k+1}\to c_k$. Recall that $C_k=x_ky_kz_kx_k$. Since $c_k$ is the sink in $T$, $d^+_{V(\mathcal{C})}(x_k)=d^+_{V(\mathcal{C})}(y_k)=d^+_{V(\mathcal{C})}(z_k)=1$. In particular, $d^+_{V(U)}(x_k)\geq 2(k-1)>0$. Therefore, as $c_{k+1}\to c_k$, $x_k$ has an out-neighbour $u$ in $V(U)\setminus V(C_{k+1})$. On the other hand, since $D$ is a multipartite tournament, $D'$ is the unique terminal component of $U$. Thus, there is an $(u,x_{k+1})$-path $P$ in $U$. Since $c_{k+1}\to c_{k}$, $x_{k+1}$ dominates $y_{k}$ or $z_{k}$ (or both). Thus, $x_kuPx_{k+1}y_{k}C_k[y_{k},x_{k}]x_k$ or $x_kuPx_{k+1}z_{k}C_k[z_{k},x_{k}]x_k$ is a cycle with length at least 4 in $D[V(U)\cup V(C_{k})\cup V(C_{k+1})]$ which together with $k-1$ 3-cycles in $\mathcal{C}\setminus \{C_k\}$ implies that $\kappa^k(D)\geq 2$, a contradiction. This completes the proof.
	\end{proof}
	\begin{theorem}
		Let $D$ be a $3$-partite tournament with $\delta^+(D)\geq 2k-1$. Then $\kappa^k(D)=1$ if and only if the only nontrivial strong component is the terminal strong component $D'$ and $D'$ is a member of $BT(n_1,\ldots, n_{2k})$, where $n_i\geq 2k-1$ for $i=1,\ldots,2k$.
	\end{theorem}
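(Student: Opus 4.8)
The plan is to prove the two implications separately, first reducing to the triangle-free case via Theorem~\ref{thm} and then pinning down the terminal component via Theorem~\ref{A}. The structural fact I will lean on throughout is that any $D'\in BT(n_1,\dots,n_{2k})$ has a $Y$-side consisting of exactly $2k$ singletons $y_1,\dots,y_{2k}$, that every cycle alternates between the two sides and hence has even length $2\ell$ using exactly $\ell$ of the $y_i$'s, and that the shortest cycles are the $4$-cycles $y_i\,x\,y_j\,x'\,y_i$ with $x\in X_i$, $x'\in X_j$, $i\ne j$; longer cycles, e.g.\ the $6$-cycle $y_1 x y_2 x' y_3 x'' y_1$, are built in the same way.

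For the ($\Leftarrow$) direction I would argue by counting $y_i$'s. If the only nontrivial strong component is the terminal $D'\in BT(n_1,\dots,n_{2k})$ with all $n_i\ge 2k-1$, then every cycle of $D$ lies in a single strong component and hence in $D'$, so any $k$ vertex-disjoint cycles lie in $D'$ and use only the $2k$ vertices $y_1,\dots,y_{2k}$. Each cycle needs at least two of them, so $k$ disjoint cycles need at least $2k$; as only $2k$ are available, each cycle uses exactly two and is therefore a $4$-cycle. Hence every such family is length-uniform and $\kappa^k(D)\le 1$, while $\delta^+(D')\ge 2k-1$ gives $k$ disjoint $4$-cycles by Theorem~\ref{thm:bipartite tournament}, so $\kappa^k(D)=1$.

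For the ($\Rightarrow$) direction, assume $\kappa^k(D)=1$. Theorem~\ref{thm} forces $D$ to be triangle-free, and then Theorem~\ref{thm:v-pancyclic} rules out strong $t$-partite components with $t\ge 3$, so every nontrivial strong component is a bipartite tournament. Let $D'$ be a terminal (sink) strong component; since no arc leaves $D'$, we have $\delta^+(D')\ge 2k-1$ and $D'$ is nontrivial. Every cycle-family of $D'$ is also a cycle-family of $D$ with the same distinct-length count, so $1\le\kappa^k(D')\le\kappa^k(D)=1$, and Theorem~\ref{A} yields $D'\in BT(n_1,\dots,n_{2k})$. Reading off $d^+_{D'}(y_i)=|X_i|=n_i\ge 2k-1$ then gives the bound for \emph{all} $i\in[2k]$, strengthening the conclusion of Theorem~\ref{A}.

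It remains to show $D'$ is the unique nontrivial component, and this is the step I expect to be the main obstacle. Suppose a second nontrivial component $H\ne D'$ exists. Being a bipartite tournament, $H$ contains a cycle $C_H$ of even length $2m\ge 4$, vertex-disjoint from $D'$. If $2m\ge 6$, I pair $C_H$ with $k-1$ disjoint $4$-cycles of $D'$; if $2m=4$, I instead pair $C_H$ with a $6$-cycle of $D'$ together with $k-2$ disjoint $4$-cycles, which fit since they consume at most $2k-1\le 2k$ of the $y_i$'s and every $X_i$ is large. In both cases I obtain $k$ vertex-disjoint cycles of at least two distinct lengths, contradicting $\kappa^k(D)=1$. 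The delicate point is exactly the $2m=4$ subcase: a second bipartite component may admit only $4$-cycles, so a length mismatch must be manufactured inside $D'$ by promoting one $4$-cycle to a $6$-cycle, which is feasible precisely because $|Y|=2k$ leaves slack once one cycle is taken outside $D'$. This rules out $H$, so the unique nontrivial component is the sink $D'$, and both implications are established.
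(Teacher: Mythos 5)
Your proof is correct and follows essentially the same route as the paper: triangle-freeness via Theorem~\ref{thm}, bipartiteness of nontrivial strong components via Theorem~\ref{thm:v-pancyclic}, Theorem~\ref{A} applied to the terminal component (with the out-degree of the $y_i$'s upgrading the bound to all $i\in[2k]$), and families mixing $4$-cycles with a $6$-cycle inside the $BT$ structure to exclude any further nontrivial component. If anything, you are more thorough than the paper: you spell out the converse direction (which the paper leaves implicit), and your case split on the length of $C_H$ sidesteps a small $k=2$ issue in the paper's argument, where its fixed family of $k-2$ four-cycles plus one six-cycle fails to give two distinct lengths if the cycle taken from the second component happens to be a $6$-cycle.
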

	
	\begin{proof}
		Let $D$ be a 3-partite tournament with $\delta^+(D)\geq 2k-1$ and $\kappa^k(D)=1$. By Theorem \ref{thm}, $D$ is triangle-free. Let $D_1, \ldots, D_s$ be an acyclic ordering of the strong components of $D$. Note that $\delta^+(D_s)\geq \delta^+(D)\geq 2k-1$. Then by Theorem \ref{A} and Theorem \ref{thm:v-pancyclic}, $D_s$ is a member of $BT(n_1,\ldots, n_{2k})$, where $n_i\geq 2k-1$ for every $i\in [2k]$. One can observe that there exists a collection of $k-1$ vertex-disjoint cycles $\mathcal{C}$ in $D_s$, where $k-2$ of them are 4-cycles and one of them is 6-cycle. For $i\not= s$, if $D_i$ is strong, then there exists a cycle $C'$ in $D_i$, and $\kappa^k(\mathcal{C}\cup \{C'\})\geq 2$, a contradiction. Thus, for every $i\neq s$, $D_i$ is a single vertex. This completes the proof.
	\end{proof}

		\section{Discussion}\label{sec:disc}
	
We proved the Bermond-Thomassen conjecture for triangle-free multipartite tournaments and 3-partite tournaments. We also characterized 3-partite tournaments with minimum out-degree at least $2k-1$ ($k\geq 2$) in which every $k$ vertex-disjoint directed cycles have the same length. This conjecture seems to be very difficult even for multipartite tournaments. In fact, we were unable to verify the conjecture even for 4-partite tournaments, in general. It would be interesting to see a proof of this conjecture for multipartite tournaments.

		\begin{problem}
		Verify that every multipartite tournament $D$ with $\delta^+(D)\geq 2k-1$ has $k$ vertex-disjoint cycles.
	\end{problem}
	Perhaps, it'd be useful to consider the following two subproblems as a starting point. An undirected graph $G=(V,E)$ is a {\em split graph} if $V$ can be partitioned into two non-empty subsets $X$ and $Y$ such that $G[X]$ is a complete graph and $G[Y]$ has no edges. A split graph $G=(V,E)$ with partition $X,Y$ of $V$ is a {\em complete split graph} if every pair of vertices $x\in X$ and $y\in Y$ is adjacent.
%	A {\em split digraph} is an orientation of a split graph.
A {\em complete split oriented graph} is an orientation of a complete split graph. Note that a complete split digraph is a multipartite tournament.
	\begin{problem}
		Verify that every complete split oriented graph $D$ with $\delta^+(D)\geq 2k-1$ has $k$ vertex-disjoint cycles.
	\end{problem}
		\begin{problem}
		Verify that every $4$-partite tournament $D$ with $\delta^+(D)\geq 2k-1$ has $k$ vertex-disjoint cycles.
	\end{problem}

	\paragraph{Acknowledgement} This research of Li was supported by China Scholarship Council (CSC) and the Natural Science Basic Research Planin Shaanxi Province of China (Grant No. 2020JM-133). This research of Wang and Zhou was supported by China Scholarship Council (CSC).
This research of Yeo was supported by the danish research council under grant number DFF-7014-00037B.

	\end{document}